\newtheorem{theorem}{Theorem}
\newtheorem{lemma}[theorem]{Lemma}
\theoremstyle{definition}
\newtheorem{definition}[theorem]{Definition}
\newtheorem{example}[theorem]{Example}
\newtheorem{remark}[theorem]{Remark}
\newcommand{\C}{{\mathbb C}}
\begin{document}

\title[Generators of von~Neumann algebras]{Generators of von~Neumann algebras associated \\ with spectral measures}

\author{A.G.~Smirnov}
\address{I.~E.~Tamm Theory Department, P.~N.~Lebedev
Physical Institute, Leninsky prospect 53, Moscow 119991, Russia}

\email{smirnov@lpi.ru}

\begin{abstract}
Let $\mathcal P_E$ be the set of all values of a spectral measure $E$ and $\mathcal A(\mathcal P_E)$ be the smallest von~Neumann algebra containing $\mathcal P_E$. We give a simple description of all sets of generators of $\mathcal A(\mathcal P_E)$ in terms of the integrals with respect to $E$. The treatment covers not only the case of generators belonging to $\mathcal A(\mathcal P_E)$, but also the case of (possibly unbounded) generators affiliated with this algebra.
\end{abstract}

\maketitle

Let $S$ be a measurable space with a $\sigma$-algebra $\Sigma$ of measurable sets, $\mathfrak H$ be a Hilbert space, and $L(\mathfrak H)$ be the algebra of all bounded everywhere defined linear operators in $\mathfrak H$. Recall~\cite{BirmanSolomjak} that a map $E$ from $\Sigma$ to the set of orthogonal projections on $\mathfrak H$ is called a spectral measure for $(S,\mathfrak H)$ if it is countably additive with respect to the strong operator topology on $L(\mathfrak H)$ and $E(S)$ is the identity operator in $\mathfrak H$. Let $\mathcal P_E$ denote the set of all projections $E(A)$ with $A\in \Sigma$, and let $\mathcal A(\mathcal P_E)$ be the von~Neumann algebra generated by $\mathcal P_E$, i.e., the smallest von~Neumann algebra containing $\mathcal P_E$. In this paper, we give a simple description of all sets of generators of $\mathcal A(\mathcal P_E)$ in terms of the integrals with respect to $E$.

This problem arises naturally when considering self-adjoint extensions of Schr\"o\-din\-ger operators with singular potentials commuting with a given set of symmetries~\cite{GSVT}. In such quantum-mechanical applications, it is convenient to consider not only generators belonging to the considered algebra, but also more general operators which are affiliated with it and may be unbounded. In this paper, we shall work in such a more general setting.

We say that an operator $T$ in $\mathfrak H$ with the domain $D_{T}$ commutes with $R\in L(\mathfrak H)$ if $R\Psi\in D_{T}$ and $RT\Psi=TR \Psi$ for any $\Psi\in D_{T}$. Given a set $\mathcal X$ of closed densely defined operators in $\mathfrak H$, let $\mathcal X'$ denote its commutant, i.e., the subalgebra of $L(\mathfrak H)$ consisting of all operators commuting with every element of $\mathcal X$. We denote by $\mathcal X^*$ the set consisting of the adjoints of the elements of $\mathcal X$. The set $\mathcal X$ is called involutive if $\mathcal X^*=\mathcal X$. If a densely defined operator $T$ commutes with $R\in L(\mathfrak H)$, then its adjoint $T^*$ commutes with $R^*$. Indeed, if $\Psi\in D_{T^*}$ and $\Phi=T^*\Psi$, then we have $\langle T\Psi',\Psi\rangle=\langle \Psi',\Phi\rangle$ for any $\Psi'\in D_T$ and, hence
\[
\langle T\Psi',R^*\Psi\rangle=\langle TR\Psi',\Psi \rangle=\langle R\Psi',\Phi \rangle = \langle \Psi',R^*\Phi \rangle,\quad \Psi'\in D_T,
\]
where $\langle \cdot,\cdot\rangle$ is the scalar product on $\mathfrak H$.
This means that $R^*\Psi\in D_{T^*}$ and $T^* R^* \Psi=R^*T^*\Psi$, i.e., $T^*$ commutes with $R^*$. It follows that
\begin{equation}\label{adjcomm}
(\mathcal X')^*=(\mathcal X^*)'.
\end{equation}

Recall~\cite{Dixmier} that a subalgebra $\mathcal M$ of $L(\mathfrak H)$ is
called a von~Neumann algebra if it is involutive and coincides with its bicommutant~$\mathcal M''$. By the well-known von~Neumann's bicommutant theorem (see,
e.~g.,~\cite{Dixmier}, Sec.~I.3.4, Corollaire~2), an involutive subalgebra $\mathcal M$ of $L(\mathfrak H)$ is a von~Neumann algebra if and only if
it contains the identity operator and is closed in the strong operator topology. It follows from~(\ref{adjcomm}) that $\mathcal X'$ is an involutive subalgebra of $L(\mathfrak H)$ for any involutive set $\mathcal X$ of closed densely defined operators in $\mathfrak H$. Moreover, as shown by the next lemma, $\mathcal X'$ is always strongly closed and, therefore, is a von~Neumann algebra for involutive $\mathcal X$ by the bicommutant theorem.

\begin{lemma} \label{l2}
Let $\mathcal X$ be a set of densely defined closed operators in $\mathfrak H$. Then $\mathcal X'$ is a strongly closed subset of $L(\mathfrak H)$.
\end{lemma}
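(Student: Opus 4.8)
The plan is to verify directly that $\mathcal X'$ absorbs the strong-operator limit of any convergent net of its elements. Since the strong operator topology on $L(\mathfrak H)$ need not be metrizable, I would phrase the whole argument in terms of nets rather than sequences. So I take a net $(R_\alpha)$ in $\mathcal X'$ converging strongly to some $R\in L(\mathfrak H)$, and the task is to show that $R$ commutes with every $T\in\mathcal X$, i.e., that $R\Psi\in D_T$ and $RT\Psi=TR\Psi$ for each $T\in\mathcal X$ and each $\Psi\in D_T$.

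Fix such a $T$ and $\Psi$. Because every $R_\alpha$ lies in $\mathcal X'$, we have $R_\alpha\Psi\in D_T$ and $TR_\alpha\Psi=R_\alpha T\Psi$ for all $\alpha$. Strong convergence means that $R_\alpha\Phi\to R\Phi$ in the norm of $\mathfrak H$ for each fixed $\Phi$. Applying this to $\Phi=\Psi$ gives $R_\alpha\Psi\to R\Psi$, and applying it to the fixed vector $\Phi=T\Psi$ gives $TR_\alpha\Psi=R_\alpha T\Psi\to RT\Psi$.

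The key step is to invoke the closedness of $T$. The pairs $(R_\alpha\Psi, TR_\alpha\Psi)$ all belong to the graph $G(T)=\{(\Phi,T\Phi):\Phi\in D_T\}$, which is a closed subspace of $\mathfrak H\oplus\mathfrak H$ precisely because $T$ is closed. By the two convergences just established, this net of pairs converges in $\mathfrak H\oplus\mathfrak H$ to $(R\Psi, RT\Psi)$. Since a closed set in any topological space contains the limit of every convergent net drawn from it, the point $(R\Psi, RT\Psi)$ lies in $G(T)$, which is exactly the assertion that $R\Psi\in D_T$ and $TR\Psi=RT\Psi$. As $T\in\mathcal X$ and $\Psi\in D_T$ were arbitrary, $R\in\mathcal X'$, so $\mathcal X'$ is strongly closed.

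I expect the only genuine subtlety to be the use of nets in place of sequences: because the strong operator topology is in general not metrizable, sequential closedness alone would not establish closedness, so it matters that the graph of a closed operator, being an honestly closed subset of the metrizable space $\mathfrak H\oplus\mathfrak H$, absorbs limits of arbitrary convergent nets and not merely of sequences. Everything else amounts to a routine interchange of the (fixed) action of $T$ with the strong limit. I would also remark that neither the density of the domains $D_T$ nor any involutivity of $\mathcal X$ enters here; only the closedness of each $T\in\mathcal X$ is used.
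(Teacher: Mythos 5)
Your proof is correct and follows essentially the same route as the paper's: both arguments fix $T\in\mathcal X$ and $\Psi\in D_T$, use strong convergence at the two vectors $\Psi$ and $T\Psi$ together with the commutation relations $TR_\alpha\Psi=R_\alpha T\Psi$, and invoke the closedness of $T$ to conclude that $R\Psi\in D_T$ and $TR\Psi=RT\Psi$. The only difference is in the topological bookkeeping: where you pass to nets, the paper sidesteps non-metrizability by choosing, for each fixed $\Psi$, a sequence $R_n\in C_T$ lying in the strong neighborhoods $W_{\Psi,\,T\Psi,\,n}$ of $R$ --- approximation at just the two relevant vectors is all that is needed, so sequences suffice.
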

\begin{proof}
Given an operator $T$ in $\mathfrak H$, let $C_T$ denote the set of all elements of $L(\mathfrak H)$ commuting with $T$. Since $\mathcal X'=\bigcap_{T\in\mathcal X} C_T$, it suffices to prove that $C_T$ is strongly closed for any
closed $T$. Let $R$ belong to the strong closure of $C_T$. For every $\Psi_1,\Psi_2\in\mathfrak H$ and $n=1,2,\ldots$, the set
\[
W_{\Psi_1,\Psi_2,n}=\{\tilde R\in L(\mathfrak H) : \|(\tilde R-R)\Psi_i\|<1/n,\,\,i=1,2\}
\]
is a strong neighborhood of $R$ and, hence, has a nonempty intersection with $C_T$. Fix $\Psi\in D_T$ and choose $R_n\in C_T\cap
W_{\Psi,\,T\Psi,\,n}$ for each $n$. Then $R_n \Psi\to R\Psi$ and $R_n T \Psi\to RT\Psi$ in $\mathfrak H$. As $R_n$ commute with $T$, we have $R_n
\Psi\in D_T$ and $R_nT \Psi= TR_n \Psi$ for all $n$. In view of the closedness of $T$, it follows that $R\Psi\in D_T$ and $TR\Psi = RT\Psi$, i.e.,
$R\in C_T$. The lemma is proved.
\end{proof}

A closed densely defined operator $T$ in $\mathfrak H$ is called affiliated~\cite{KR1} with a von~Neumann algebra $\mathcal M$ if $T$ commutes with every element of $\mathcal M'$. If $\mathcal X$ is a set of closed densely defined operators in $\mathfrak H$, then every element of $\mathcal X$ is obviously affiliated with the algebra $\mathcal A(\mathcal X)=(\mathcal X\cup\mathcal X^*)''$. In fact, the latter is the smallest von~Neumann algebra with this property. The algebra $\mathcal A(\mathcal X)$ will be called the von~Neumann algebra generated by $\mathcal X$, and $\mathcal X$ will be referred to as a set of generators of $\mathcal A(\mathcal X)$. If $\mathcal X\subset L(\mathfrak H)$, then $\mathcal A(\mathcal X)$ is just the smallest von~Neumann algebra containing $\mathcal X$. Clearly, a closed densely defined operator $T$ is affiliated with a von~Neumann algebra $\mathcal M$ if and only if $\mathcal A(T)\subset \mathcal M$ (here and subsequently, we write $\mathcal A(T)$ instead of $\mathcal A(\{T\})$, where $\{T\}$ is the one-point set containing $T$).

Let $E$ be a spectral measure for $(\mathfrak H,S)$. Then
$E(A_1\cap A_2)=E(A_1)E(A_2)$ for any measurable $A_1,A_2\subset S$ (see~\cite{BirmanSolomjak}, Sec.~5.1, Theorem~1) and, hence, $E(A_1)$ and $E(A_2)$ commute. This implies that the algebra $\mathcal A(\mathcal P_E)$ is Abelian. For any $\Psi\in \mathfrak H$, the finite positive measure $E_\Psi$ on $S$ is defined by setting $E_\Psi(A)=\langle E(A)\Psi,\Psi\rangle$ for any measurable $A$.
Given an $E$-measurable\footnote{As usual, a complex function $f$ defined on $S$ is called measurable if the set $f^{-1}(A)$ is measurable for any Borel subset $A$ of $\C$. A complex function is called $E$-measurable on $S$ if it is defined $E$-almost everywhere ($E$-a.e.) and coincides $E$-a.e. with a measurable function.} complex function $f$ on $S$, the integral $J^E_f$ of $f$ with
respect to $E$ is defined as the unique linear operator in $\mathfrak H$ such that
\begin{align}
&D_{J^E_f}=\left\{\Psi\in \mathfrak H : \int |f(s)|^2\,dE_\Psi(s)<\infty \right\} \label{dom} \\
& \langle\Psi, J^E_f \Psi\rangle = \int f(s)\,dE_\Psi(s),\quad \Psi\in D_{J^E_f}. \nonumber
\end{align}

For every normal\footnote{Recall that a closed densely defined linear operator $T$
in a Hilbert space $\mathfrak H$ is called normal if the operators $TT^*$ and $T^*T$ have the same domain of definition and coincide thereon.} operator $T$, there is a unique spectral measure $\mathcal E_T$ on $\C$ such that $J^{\mathcal E_T}_{\mathrm{id}_{\C}}=T$, where
$\mathrm{id}_{\C}$ is the identity function on $\C$. The operators $\mathcal E_T(A)$, where $A$ is a Borel subset of $\C$, are called the spectral
projections of $T$. If $f$ is an $\mathcal E_T$-measurable complex function on $\C$, then the operator $J_f^{\mathcal E_T}$ is also denoted as $f(T)$.

Recall that a topological space $S$ is called a Polish space if its topology can be induced by a metric that makes $S$ a separable complete space. A
measurable space $S$ is called a standard Borel space if its measurable structure can be induced by a Polish topology on $S$. A spectral measure $E$ on a
measurable space $S$ is called standard if there is a measurable set $S'\subset S$ such that $E(S\setminus S')=0$ and $S'$, considered as a
measurable subspace of $S$, is a standard Borel space.

Let $\{f_\iota\}_{\iota\in I}$ be a family of maps and $S$ be a set contained in the domains of $f_\iota$ for all $\iota\in I$. The family $\{f_\iota\}_{\iota\in I}$ is said to separate points of $S$ if for any two distinct elements $s_1$ and $s_2$ of $S$, there is
$\iota\in I$ such that $f_\iota(s_1)\neq f_\iota(s_2)$.

\begin{definition}\label{d_exact}
Let $S$ be a measurable space and $E$ be a spectral measure on $S$. A family $\{f_\iota\}_{\iota\in I}$ of maps defined $E$-a.e. on $S$ is said to be $E$-separating on
$S$ if $I$ is countable and $\{f_\iota\}_{\iota\in I}$ separates points of $S\setminus N$ for some $E$-null set $N$.
\end{definition}

Our aim is to prove the next result.

\begin{theorem}\label{t0}
Let $S$ be a measurable space, $\mathfrak H$ be a separable Hilbert space, and $E$ be a standard spectral measure for $(S,\mathfrak H)$.
The algebra $\mathcal A(\mathcal P_E)$ is generated by a set $\mathcal X$ of closed densely defined operators in $\mathfrak H$ if and only if the following
conditions hold
\begin{enumerate}
\item For every $T\in \mathcal X$, there is an $E$-measurable complex function $f$ on $S$ such that $T=J^E_f$.

\item There is an $E$-separating family $\{f_\iota\}_{\iota\in I}$ of $E$-measurable complex functions on $S$ such that $J^E_{f_\iota}\in\mathcal X$
for all $\iota\in I$.

\end{enumerate}
\end{theorem}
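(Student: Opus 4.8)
The plan is to translate the operator-algebraic identity $\mathcal A(\mathcal X)=\mathcal A(\mathcal P_E)$ into a measure-theoretic statement about sub-$\sigma$-algebras of $S$, and then to invoke Blackwell's theorem on standard Borel spaces. I would lean throughout on three facts about the functional calculus. (a) For every $E$-measurable $f$ the operator $J^E_f$ is normal, with spectral measure $\mathcal E_{J^E_f}(B)=E(f^{-1}(B))$; hence $\mathcal A(J^E_f)$ is the von~Neumann algebra generated by the projections $E(f^{-1}(B))$, $B\in\mathcal B(\C)$, and in particular $\mathcal A(J^E_f)\subseteq\mathcal A(\mathcal P_E)$. (b) The map $g\mapsto J^E_g$ sends the bounded $E$-measurable functions onto $\mathcal A(\mathcal P_E)$, so every projection in $\mathcal A(\mathcal P_E)$ is of the form $E(A)$ with $A\in\Sigma$. (c) Since $\mathfrak H$ is separable, the abelian algebra $\mathcal A(\mathcal P_E)$ carries a faithful normal state $\omega$, and the finite measure $\mu(A)=\omega(E(A))$ has exactly the $E$-null sets as its null sets. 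Passing to a standard Borel subspace $S'$ with $E(S\setminus S')=0$, I may then work inside the standard probability space $(S',\mu)$ and identify $\mathcal A(\mathcal P_E)$ with $L^\infty(S',\mu)$, all identifications being taken modulo $E$-null sets.

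For the sufficiency direction, assume (1) and (2). By (a) each $T=J^E_{f_T}\in\mathcal X$ satisfies $\mathcal A(T)\subseteq\mathcal A(\mathcal P_E)$, whence $\mathcal A(\mathcal X)\subseteq\mathcal A(\mathcal P_E)$. For the reverse inclusion I let $\mathcal B_0$ be the countably generated sub-$\sigma$-algebra of $\Sigma$ generated by the sets $f_\iota^{-1}(B)$, where $\iota\in I$ and $B$ ranges over a countable generating family of $\mathcal B(\C)$. By (a) the algebra generated by $\{J^E_{f_\iota}\}_{\iota\in I}$ coincides with the one generated by $\{E(A):A\in\mathcal B_0\}$. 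Discarding the $E$-null set $N$ on which $\{f_\iota\}$ fails to separate points and intersecting with $S'$, I obtain a standard Borel space $S''$ on which $\{f_\iota\}$, and therefore $\mathcal B_0$, separates points; Blackwell's theorem then gives that $\mathcal B_0|_{S''}$ is the full Borel $\sigma$-algebra of $S''$. Since $E(S\setminus S'')=0$, every $E(A)$ equals $E(A\cap S'')$ with $A\cap S''\in\mathcal B_0$, so $\{E(A):A\in\mathcal B_0\}\supseteq\mathcal P_E$ and $\mathcal A(\mathcal X)\supseteq\mathcal A(\mathcal P_E)$.

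For the necessity direction, assume $\mathcal A(\mathcal X)=\mathcal A(\mathcal P_E)$. Each $T\in\mathcal X$ is affiliated with the abelian algebra $\mathcal A(\mathcal P_E)$, and since both the partial isometry and the positive part of its polar decomposition lie in this abelian algebra, $T$ is normal. By (b) its spectral projections have the form $\mathcal E_T(B)=E(A_B)$, and $B\mapsto[A_B]$ is a Boolean $\sigma$-homomorphism of $\mathcal B(\C)$ into $\Sigma/\mu$. I realize it by an explicit point map $f\colon S\to\C$, building the real and imaginary parts of $f$ from the images of the rational half-planes; then $E(f^{-1}(B))=\mathcal E_T(B)$, so $T=J^E_f$, giving (1). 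To obtain (2), I use separability again: the projections $E(f_T^{-1}(B))$ generate $\mathcal A(\mathcal P_E)$, and since the unit ball of a von~Neumann algebra on a separable space is strongly metrizable and separable, a countable subfamily $\{f_k\}\subseteq\{f_T\}$ already generates $\mathcal A(\mathcal P_E)$. It remains to show that such a family is $E$-separating. With $\mathcal B_0=\sigma(\{f_k^{-1}(B)\})$ and $F=(f_k)\colon S'\to\C^{\mathbb{N}}$, the hypothesis forces $L^\infty(\mathcal B_0,\mu)=L^\infty(S',\mu)$, i.e.\ $\mathcal B_0=\Sigma|_{S'}$ modulo $\mu$-null sets. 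Choosing a countable family $\{g_n\}$ of Borel functions separating the points of the standard Borel space $S'$, each $g_n$ is then $\mu$-a.e.\ equal to a $\mathcal B_0$-measurable function, hence by Doob--Dynkin to some $h_n\circ F$; on the co-null set where all these equalities hold, injectivity of $(g_n)$ forces injectivity of $F$, so $\{f_k\}$ separates points off an $E$-null set.

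\textbf{Main obstacle.} The functional-calculus bookkeeping in (a)--(c) is routine; the substance is the two-way correspondence between the operator-algebraic condition and point separation, which is precisely where standardness of $S$ is indispensable. For sufficiency the essential input is Blackwell's theorem (a countably generated, point-separating sub-$\sigma$-algebra of a standard Borel space is the whole Borel $\sigma$-algebra), and for necessity its measure-theoretic converse (equality of the measure algebras forces a.e.\ injectivity of $F$). The most delicate part is the systematic reduction modulo $E$-null sets, carried out through the faithful measure $\mu$, together with the passage from the full generating set $\mathcal X$ to a countable subfamily, which hinges on the separability of $\mathfrak H$.
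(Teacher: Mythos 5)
Your proposal is correct in substance, but it takes a genuinely different route from the paper's in several places, so it is worth comparing the two. For the sufficiency direction, the paper never invokes Blackwell's theorem: it uses Souslin's theorem (Lemma~\ref{l4a}) to re-topologize $S$ so that the separating functions $f_\iota$ become \emph{continuous} (Lemma~\ref{l4b}), and then proves a Stone--Weierstrass-type statement (Lemma~\ref{l01}) by approximating continuous functions uniformly on compact sets, using tightness of the finite measures $E_\Psi$ on a Polish space, and finishing with a $\sigma$-algebra argument passing from closed sets to all Borel sets. Your replacement --- forming the countably generated sub-$\sigma$-algebra $\mathcal B_0=\sigma\bigl(f_\iota^{-1}(B)\bigr)$ and citing the Blackwell--Mackey theorem to get $\mathcal B_0=\Sigma$ on a conull standard subspace --- is shorter and purely measure-theoretic, though it rests on the same descriptive-set-theoretic input (Lusin--Souslin injectivity) and still needs the routine monotone-class step identifying the von~Neumann algebra generated by $\{J^E_{f_\iota}\}$ with that generated by $\{E(A):A\in\mathcal B_0\}$ (the same kind of argument the paper uses to prove inclusion~(\ref{incl1})); also, strictly speaking $A\cap S''$ lies in the trace $\sigma$-algebra $\mathcal B_0|_{S''}$ rather than in $\mathcal B_0$, but since $E(S\setminus S'')=0$ one gets $E(A)=E(A')$ for some $A'\in\mathcal B_0$, so this is only a cosmetic imprecision. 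For the necessity direction your argument is closer to the paper's: your countable-subfamily extraction is exactly Lemma~\ref{l41}, and your Doob--Dynkin step (measure-algebra equality forces countably many separating bounded Borel functions $g_n$ to factor a.e.\ through $F$, whence $F$ is a.e.\ injective) is the same idea as the paper's, which instead takes a \emph{single} injective measurable $g$ into $[0,1]$ (Lemma~\ref{l4a}, statement~3) and factors it as $g=h\circ f$ $E$-a.e.\ via push-forward spectral measures and Lemma~\ref{l4d}; note the paper must apply Lemma~\ref{l01} to $f_*E$ on $\C^I$, which is why that lemma is stated for arbitrary (not necessarily standard) spectral measures on Polish spaces, a subtlety your measure-algebra formulation bypasses. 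The main divergence here is condition~(1): where you construct the function by a point realization of the Boolean $\sigma$-homomorphism $B\mapsto[A_B]$ from rational half-planes (correct, but requiring careful a.e.\ bookkeeping to choose consistent representatives), the paper uses the slicker bounded-transform trick $\chi(z)=z/(1+|z|)$, writing $\chi(T)=J^E_g\in\mathcal A(\mathcal P_E)$ and then $T=\chi^{-1}(\chi(T))=J^E_{\chi^{-1}\circ g}$ via~(\ref{composition}). In sum: your approach buys the elimination of all topological machinery (no Stone--Weierstrass, no tightness, no re-topologization) at the cost of citing Blackwell's theorem and the $\sigma$-homomorphism realization as black boxes; the paper's approach derives everything from the single Souslin-type statement in Lemma~\ref{l4a} and elementary approximation arguments.
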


\begin{remark}\label{rem1}
Every Abelian von~Neumann algebra $\mathcal M$ acting in a separable Hilbert space $\mathfrak H$ is equal to $\mathcal A(\mathcal P_E)$ for a suitable standard spectral measure $E$. Indeed, by Th\'eor\`eme~2 of Sec.~II.6.2 in~\cite{Dixmier}, there are a finite measure $\nu$ on a compact metrizable
space $S$, a $\nu$-measurable family $\mathfrak S(s)$ of Hilbert spaces, and a unitary operator $V\colon \mathfrak H \to \int^\oplus_S \mathfrak
S(s)\,d\nu(s)$ such that $\mathcal M$ coincides with the set of all operators $V^{-1}\mathcal T_g V$, where $\mathcal T_g$ is the operator of multiplication by a $\nu$-measurable $\nu$-essentially bounded function $g$ on $S$. Then we can define $E$ by setting $E(A)=V^{-1}\mathcal T_{\chi_A} V$ for any Borel set $A\subset S$, where $\chi_A$ is the characteristic function of $A$.
\end{remark}

\begin{remark}
Every standard Borel space can be mapped into the segment $[0,1]$ by a measurable one-to-one function (see Lemma~\ref{l4a} below). Hence, given a standard spectral measure $E$ on a measurable space $S$, there always exists an $E$-measurable real bounded function $f$ on $S$ such that the family containing this single function is $E$-separating. By Theorem~\ref{t0}, it follows that $\mathcal A(\mathcal P_E)$ is generated by the self-adjoint operator $J^E_f$. In view of remark~\ref{rem1}, this implies the well-known fact~\cite{Neumann} that every Abelian von~Neumann algebra acting in a separable Hilbert space is generated by a single self-adjoint element.
\end{remark}

\begin{example}
Let $\nu$ be a positive measure on a measurable space $S$ and let the spectral measure $E$ in the Hilbert space $L^2(S,\nu)$ be given by $E(A)=\mathcal T_{\chi_A}$, where $\mathcal T_{\chi_A}$ is the operator of multiplication by the characteristic function $\chi_A$ of $A$. Then $\mathcal A(\mathcal P_E)$ is identified with the algebra $L^\infty(S,\nu)$ (see Lemma~\ref{l4d} below) acting by multiplication on $L^2(S,\nu)$. If $\nu$ is standard, then it follows from Theorem~\ref{t0} that a set $\mathcal C\subset L^\infty(S,\nu)$ generates $L^\infty(S,\nu)$ if and only if $\mathcal C$ contains a $\nu$-separating family on $S$. This statement can be viewed as an analogue of the Stone-Weierstrass theorem in the setting of $L^\infty$-spaces.
\end{example}

We say that two sets $\mathcal X$ and $\mathcal Y$ of closed densely defined operators in $\mathfrak H$ are equivalent if $\mathcal A(\mathcal X) =
\mathcal A(\mathcal Y)$. We say that $\mathcal X$ is equivalent to a closed densely defined operator $T$ if $\mathcal X$ is equivalent to the
one-point set $\{T\}$.

\begin{example}\label{e2}
Let $\Lambda\subset \C$ be a set having an accumulation point in $\C$ and let $f_\lambda(z)=e^{\lambda z}$ for $\lambda\in\Lambda$ and $z\in\C$.
Clearly, we can choose a countable set $\Lambda_0\subset \Lambda$ that has an accumulation point in $\C$. If $f_\lambda(z)=f_\lambda(z')$ for some
$z,z'\in \C$ and all $\lambda\in\Lambda_0$, then we have $z=z'$ by the uniqueness theorem for analytic functions and, therefore, the family
$\{f_\lambda\}_{\lambda\in\Lambda_0}$ separates points of $\C$. Theorem~\ref{t0} therefore implies that $\mathcal P_E$ is equivalent to
$\{J^E_{f_\lambda}\}_{\lambda\in\Lambda}$ for any spectral measure $E$ on $\C$. If $T$ is a normal operator, then applying Theorem~\ref{t0} to the identity function on $\C$ yields the well-known fact (see, e.g., Theorem~3 of Sec.~6.6 in~\cite{BirmanSolomjak}) that $T$ is equivalent to $\mathcal P_{\mathcal E_T}$. This implies that $T$ is equivalent to the set of all operators $e^{\lambda T}$ with $\lambda\in\Lambda$.
\end{example}

The rest of the paper is devoted to the proof of Theorem~\ref{t0}.

\begin{lemma}\label{l4}
Let $\{\mathcal X_\iota\}_{\iota\in I}$ and $\{\mathcal Y_\iota\}_{\iota\in I}$ be families of sets of closed densely defined operators in $\mathfrak
H$ and let $\mathcal X=\bigcup_{\iota\in I}\mathcal X_\iota$ and $\mathcal Y=\bigcup_{\iota\in I}\mathcal Y_\iota$. If $\mathcal X_\iota$ and
$\mathcal Y_\iota$ are equivalent for every $\iota\in I$, then $\mathcal X$ and $\mathcal Y$ are equivalent.
\end{lemma}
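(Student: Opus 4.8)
The plan is to reduce the statement to a manipulation of commutants, exploiting that $\mathcal A(\cdot)$ is built from double commutants. Write $\mathcal S_\iota = \mathcal X_\iota \cup \mathcal X_\iota^*$ and $\mathcal T_\iota = \mathcal Y_\iota \cup \mathcal Y_\iota^*$, and set $\mathcal S = \mathcal X \cup \mathcal X^*$, $\mathcal T = \mathcal Y \cup \mathcal Y^*$. Since adjunction commutes with the formation of unions, we have $\mathcal S = \bigcup_{\iota\in I} \mathcal S_\iota$ and $\mathcal T = \bigcup_{\iota\in I}\mathcal T_\iota$, and each of the sets $\mathcal S_\iota$, $\mathcal T_\iota$, $\mathcal S$, $\mathcal T$ is involutive by construction. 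By definition $\mathcal A(\mathcal X) = \mathcal S''$ and $\mathcal A(\mathcal Y) = \mathcal T''$, so it suffices to prove $\mathcal S'' = \mathcal T''$, and for this it is enough to establish the single identity $\mathcal S' = \mathcal T'$.

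First I would record the elementary fact that the commutant of a union is the intersection of the commutants: $\mathcal S' = \bigcap_{\iota\in I} \mathcal S_\iota'$ and $\mathcal T' = \bigcap_{\iota\in I}\mathcal T_\iota'$. This is immediate from the definition of the commutant, since a bounded operator lies in $(\bigcup_\iota \mathcal S_\iota)'$ precisely when it commutes with every element of each $\mathcal S_\iota$. Thus the proof is reduced to showing $\mathcal S_\iota' = \mathcal T_\iota'$ for every $\iota$.

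The crucial step is the identity $\mathcal A(\mathcal X_\iota)' = \mathcal S_\iota'$. Because $\mathcal S_\iota$ is involutive, its commutant $\mathcal S_\iota'$ is a von~Neumann algebra: it is involutive by~(\ref{adjcomm}), contains the identity, and is strongly closed by Lemma~\ref{l2}, so the bicommutant theorem applies and gives $\mathcal S_\iota' = (\mathcal S_\iota')''$. Now $\mathcal A(\mathcal X_\iota) = \mathcal S_\iota'' = (\mathcal S_\iota')'$, so taking commutants and invoking the displayed equality yields $\mathcal A(\mathcal X_\iota)' = (\mathcal S_\iota')'' = \mathcal S_\iota'$. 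The same argument with $\mathcal Y_\iota$ in place of $\mathcal X_\iota$ gives $\mathcal A(\mathcal Y_\iota)' = \mathcal T_\iota'$. Invoking the hypothesis $\mathcal A(\mathcal X_\iota) = \mathcal A(\mathcal Y_\iota)$, we obtain $\mathcal S_\iota' = \mathcal A(\mathcal X_\iota)' = \mathcal A(\mathcal Y_\iota)' = \mathcal T_\iota'$ for each $\iota$. Intersecting over $\iota$ then gives $\mathcal S' = \mathcal T'$, and one final commutant produces $\mathcal A(\mathcal X) = \mathcal S'' = \mathcal T'' = \mathcal A(\mathcal Y)$, as required.

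The only genuinely delicate point is the identity $\mathcal S_\iota' = (\mathcal S_\iota')''$ when $\mathcal S_\iota$ contains unbounded operators; this is exactly where involutivity and the strong closedness supplied by Lemma~\ref{l2} are needed, since together they guarantee that $\mathcal S_\iota'$ is a bona fide von~Neumann algebra equal to its own bicommutant. Once this is in place, the remainder is purely formal manipulation of commutants and requires neither separability of $\mathfrak H$ nor any standardness hypothesis on a spectral measure.
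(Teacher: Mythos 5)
Your proof is correct, and it proves the lemma, but it handles the pivotal step differently from the paper. Both arguments begin identically, with the observation that the commutant of a union is the intersection of the commutants, so that $(\mathcal X\cup\mathcal X^*)'=\bigcap_{\iota\in I}(\mathcal X_\iota\cup\mathcal X_\iota^*)'$. At that point the paper invokes Proposition~1 of Sec.~I.1.1 in~\cite{Dixmier}: applying it to the intersection of the von~Neumann algebras $\mathcal M_\iota=(\mathcal X_\iota\cup\mathcal X_\iota^*)'$, it identifies $\mathcal A(\mathcal X)$ as the von~Neumann algebra generated by $\bigcup_{\iota\in I}\mathcal A(\mathcal X_\iota)$, does the same for $\mathcal Y$, and concludes from the hypothesis. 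You instead stay entirely at the level of commutants: using that $\mathcal S_\iota'=(\mathcal X_\iota\cup\mathcal X_\iota^*)'$ is a von~Neumann algebra (involutivity via~(\ref{adjcomm}), strong closedness via Lemma~\ref{l2}, then the bicommutant theorem), you obtain the triple-commutant identity $\mathcal S_\iota'''=\mathcal S_\iota'$, which turns the hypothesis $\mathcal A(\mathcal X_\iota)=\mathcal A(\mathcal Y_\iota)$ into the levelwise equality $\mathcal S_\iota'=\mathcal T_\iota'$; intersecting over $\iota$ and taking one more commutant finishes the proof. The two routes are close in spirit--indeed, the cited proposition of Dixmier is itself proved by exactly this kind of commutant manipulation, so you have in effect inlined its proof--but yours is more self-contained: it uses only the preliminary facts established in the paper before the lemma (equation~(\ref{adjcomm}), Lemma~\ref{l2}, and the bicommutant theorem), never needs the notion of the algebra generated by a union of von~Neumann algebras, and makes explicit the clean intermediate identity $(\mathcal X\cup\mathcal X^*)'=(\mathcal Y\cup\mathcal Y^*)'$. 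What the paper's version buys in exchange is a slightly shorter argument and a structural statement (that $\mathcal A(\mathcal X)$ is generated by the subalgebras $\mathcal A(\mathcal X_\iota)$) that is of some independent interest.
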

\begin{proof}
Set $\mathcal M_\iota = (\mathcal X_\iota\cup \mathcal X^*_\iota)'$ and $\mathcal M = (\mathcal X\cup \mathcal X^*)'$. Then we have $\mathcal M=\bigcap_{\iota\in I} \mathcal M_\iota$.
It follows that $\mathcal M'=\mathcal A(\mathcal X)$ coincides with the von~Neumann algebra generated by $\bigcup_{\iota\in I}\mathcal M'_\iota=\bigcup_{\iota\in I}\mathcal
A(\mathcal X_\iota)$ (see~\cite{Dixmier}, Sec.~I.1.1, Proposition~1). Analogously, $\mathcal A(\mathcal Y)$ is the von~Neumann algebra generated by
$\bigcup_{\iota\in I}\mathcal A(\mathcal Y_\iota)$. Since $\mathcal A(\mathcal X_\iota)=\mathcal A(\mathcal Y_\iota)$ for all $\iota$, it follows
that $\mathcal A(\mathcal X)=\mathcal A(\mathcal Y)$. The lemma is proved.
\end{proof}

\begin{lemma}\label{l41}
Let $\mathfrak H$ be a separable Hilbert space and $\mathcal X$ be a set of closed densely defined operators in $\mathfrak H$. Then there is a
countable subset $\mathcal X_0$ of $\mathcal X$ which is equivalent to $\mathcal X$.
\end{lemma}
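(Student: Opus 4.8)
The plan is to exploit the separability of $\mathfrak H$ through a Lindel\"of-type argument applied to the family of commutants attached to the operators in $\mathcal X$. Write $\mathcal Y=\mathcal X\cup\mathcal X^*$, so that $\mathcal A(\mathcal X)=\mathcal Y''$. For any countable $\mathcal X_0\subset\mathcal X$ put $\mathcal Y_0=\mathcal X_0\cup\mathcal X_0^*$; since $\mathcal Y_0\subset\mathcal Y$ we automatically have $\mathcal Y'\subset\mathcal Y_0'$, and it therefore suffices to choose $\mathcal X_0$ so that the reverse inclusion holds, for then $\mathcal Y_0'=\mathcal Y'$ and hence $\mathcal A(\mathcal X_0)=\mathcal Y_0''=\mathcal Y''=\mathcal A(\mathcal X)$. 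Denoting by $C_T$ the set of bounded operators commuting with $T$ (as in the proof of Lemma~\ref{l2}), I would record the identities $\mathcal Y'=\bigcap_{T\in\mathcal X}(C_T\cap C_{T^*})$ and $\mathcal Y_0'=\bigcap_{T\in\mathcal X_0}(C_T\cap C_{T^*})$, which reduce the task to extracting a countable subfamily of $\{C_T\cap C_{T^*}\}_{T\in\mathcal X}$ having the same intersection.

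Next I would pass to the closed unit ball $B$ of $L(\mathfrak H)$. Because $\mathfrak H$ is separable, $B$ equipped with the weak operator topology is compact and metrizable, and hence second countable. Each $C_T$ is a linear subspace of $L(\mathfrak H)$ and is strongly closed by Lemma~\ref{l2}; being convex, it is then also weakly closed, so the set $F_T=C_T\cap C_{T^*}\cap B$ is a weakly closed (hence compact) subset of $B$. Thus $\{F_T\}_{T\in\mathcal X}$ is a family of closed subsets of the second countable space $B$, with $\bigcap_{T\in\mathcal X}F_T=\mathcal Y'\cap B$.

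The heart of the argument is the observation that in a second countable (hence hereditarily Lindel\"of) space every family of closed sets has a countable subfamily with the same intersection: the complements $B\setminus F_T$ are open, so their union, being an open subset of a second countable space, is already the union of countably many of them, and taking complements back yields the claim. This produces a countable set $\mathcal X_0=\{T_n\}\subset\mathcal X$ with $\bigcap_n F_{T_n}=\bigcap_{T\in\mathcal X}F_T$, that is, $\mathcal Y_0'\cap B=\mathcal Y'\cap B$. Finally, since $\mathcal Y_0'$ and $\mathcal Y'$ are linear subspaces, each equals the union of the dilates of its intersection with $B$, so the equality on $B$ upgrades to $\mathcal Y_0'=\mathcal Y'$, which completes the proof by the reduction above.

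I expect the genuinely delicate points to be the topological inputs rather than any operator-theoretic computation: namely, the metrizability (equivalently, second countability) of the unit ball in the weak operator topology for separable $\mathfrak H$, and the passage from the strong closedness supplied by Lemma~\ref{l2} to weak closedness, which relies on the coincidence of the weakly closed and strongly closed convex subsets of $L(\mathfrak H)$. Once these standard facts are in place, the Lindel\"of extraction and the reduction to the unit ball are routine.
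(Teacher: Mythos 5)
Your proof is correct, and it takes a genuinely different route from the paper's. The paper works on the algebra side: it forms the union $\mathfrak A$ of the algebras $\mathcal A(\mathcal Y)$ over all finite $\mathcal Y\subset\mathcal X$, invokes the strong separability of subsets of $L(\mathfrak H)$ (for separable $\mathfrak H$) to pick a countable strongly dense subset $\mathfrak R\subset\mathfrak A$, pulls back a finite generating set $\mathcal Y_R\subset\mathcal X$ for each $R\in\mathfrak R$, and then uses the von~Neumann density theorem (an involutive unital subalgebra is strongly dense in its bicommutant) to conclude that $\mathcal A(\mathcal X_0)=\mathcal A(\mathcal X)$ for $\mathcal X_0=\bigcup_R\mathcal Y_R$. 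You instead work entirely on the commutant side, reducing the problem to extracting a countable subfamily of $\{C_T\cap C_{T^*}\}_{T\in\mathcal X}$ with the same intersection, which you do by a Lindel\"of covering argument on the unit ball; this bypasses Lemma~\ref{l4} and the density theorem altogether, at the cost of the two topological inputs you flag (metrizability of the ball in the weak operator topology, and the coincidence of weak and strong closures of convex sets). One remark: your detour through the weak operator topology is avoidable. The paper's own proof begins by noting that, for separable $\mathfrak H$, the ball $\mathcal B_n$ with the \emph{strong} operator topology is separable and metrizable, hence second countable and hereditarily Lindel\"of; since each $C_T$ is strongly closed by Lemma~\ref{l2}, the sets $F_T=C_T\cap C_{T^*}\cap B$ are already closed in that topology, and your covering argument runs verbatim without ever mentioning convexity, weak closedness, or compactness. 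With that simplification your proof is arguably shorter and more self-contained than the paper's, though the paper's density argument has the merit of exhibiting $\mathcal X_0$ as a union of finite generating sets for a dense family of elements, which is sometimes useful information in its own right.
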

\begin{proof}
We first note that every subset of $L(\mathfrak H)$ is separable in the strong topology. Indeed, for any $M\subset L(\mathfrak H)$, we have $M =
\bigcup_{n=1}^\infty M\cap\mathcal B_n$, where $\mathcal B_n=\{T\in L(\mathfrak H) : \|T\|\leq n\}$ is the ball of radius $n$ in $L(\mathfrak H)$.
Since $\mathfrak H$ is separable, $\mathcal B_n$ endowed with the strong topology is a separable metrizable space for any $n$
(see,~e.g.,~\cite{Dixmier}, Sec.~I.3.1). This implies that $M\cap\mathcal B_n$ is separable for any $n$ and, hence, $M$ is separable in the strong
topology.

Let $\mathfrak A = \bigcup_{\mathcal Y\subset \mathcal X} \mathcal A(\mathcal Y)$, where $\mathcal Y$ runs through all finite subsets of $\mathcal
X$. Obviously, $\mathcal A(T)$ is equivalent to $T$ for any closed densely defined operator $T$ and, therefore, Lemma~\ref{l4} implies that $\mathcal
X$ is equivalent to $\bigcup_{T\in \mathcal X}\mathcal A(T)$. Since the latter set is contained in $\mathfrak A$ and $\mathfrak A\subset \mathcal
A(\mathcal X)$, we conclude that $\mathfrak A$ is equivalent to $\mathcal X$. We now note that $\mathfrak A$ is an involutive subalgebra of
$L(\mathfrak H)$ containing the identity operator and, therefore, is strongly dense in $\mathfrak A''=\mathcal A(\mathcal X)$ (\cite{Dixmier},
Sec.~I.3.4, Lemma~6). Let $\mathfrak R$ be a strongly dense countable subset of $\mathfrak A$. For any $R\in\mathfrak R$, we choose a finite set
$\mathcal Y_R\subset \mathcal X$ such that $R\in \mathcal A(\mathcal Y_R)$ and put $\mathcal X_0=\bigcup _{R\in\mathfrak R}\mathcal Y_R$. Clearly,
$\mathcal X_0$ is a countable set. The algebra $\mathcal A(\mathcal X_0)$ is strongly dense in $\mathcal A(\mathcal X)$ because it contains
$\mathfrak R$. On the other hand, $\mathcal A(\mathcal X_0)$ is a von~Neumann algebra and, therefore, is strongly closed. We hence have $\mathcal
A(\mathcal X_0)=\mathcal A(\mathcal X)$, i.e., $\mathcal X_0$ is equivalent to $\mathcal X$. The lemma is proved.
\end{proof}

Let $E$ be a spectral measure for $(S,\mathfrak H)$. For any $E$-measurable complex function $f$ on $S$, the operator $J^E_f$ is normal, and we have $J^E_{\bar f}=J^{E*}_f$, where $\bar f$ is the complex conjugate function of $f$. For any
$E$-measurable $f$ and $g$ on $S$, we have
\begin{equation}\label{sum_prod}
J^E_{fg}=\overline{J^E_fJ^E_g},\quad  J^E_{f+g}=\overline{J^E_f+J^E_g},
\end{equation}
where the bar means closure (see Theorem~7 of Sec.~5.4 in~\cite{BirmanSolomjak}). The operator $J^E_f$ is everywhere defined and bounded if and only if $f$ is $E$-essentially bounded. In this case, we
have
\begin{equation}\label{normbound}
\|J^E_f\| = E\mbox{-}\mathrm{ess\,sup}_{s\in S} |f(s)|.
\end{equation}

Let $S$ and $S'$ be measurable spaces, $\mathfrak H$ be a Hilbert space, $E$ be a spectral measure for $(S,\mathfrak H)$, and $\varphi\colon S\to S'$
be an $E$-measurable map. We denote by $\varphi_* E$ the push-forward of $E$ under $\varphi$. By definition (see~\cite{BirmanSolomjak}, Sec.~5.4), this means that $\varphi_* E$ is the
spectral measure for $(S',\mathfrak H)$ such that
\[
\varphi_* E(A) = E(\varphi^{-1}(A))
\]
for any measurable $A\subset S'$. If $f$ is a $\varphi_*E$-measurable complex function on $S'$, then $f\circ \varphi$ is an $E$-measurable function
on $S$, and we have
\begin{equation}\label{pushforward}
J^{\varphi_*E}_f = J^E_{f\circ \varphi}.
\end{equation}
Let $\varphi$ be an $E$-measurable complex function on $S$. Formula~(\ref{pushforward}) with $f=\mathrm{id}_{\C}$ yields $J^{\varphi_*
E}_{\mathrm{id}_{\C}}=J^E_\varphi$. In view of the uniqueness of $\mathcal E_{J^E_\varphi}$, this means that
\begin{equation}\label{600}
\mathcal E_{J^E_\varphi}=\varphi_* E.
\end{equation}
By~(\ref{pushforward}), it follows that $f\circ\varphi$ is $E$-measurable and
\begin{equation}\label{composition}
f(J^E_\varphi) = J^{\varphi_* E}_f = J^E_{f\circ \varphi}
\end{equation}
for any $\mathcal E_{J^E_\varphi}$-measurable complex function $f$ on $\C$.

It follows from Theorem~5.6.18 in~\cite{KR1} that a closed densely defined operator $T$ in $\mathfrak H$ is normal if and only if the algebra $\mathcal A(T)$ is Abelian.

The next lemma implies, in particular, that condition~(1) of Theorem~\ref{t0} holds if and only if every element of $\mathcal X$ is affiliated with $\mathcal A(\mathcal P_E)$.

\begin{lemma}\label{l4d}
Let $S$ be a measurable space, $\mathfrak H$ be a separable Hilbert space, and $E$ be a spectral measure for $(\mathfrak H,S)$. Then $\mathcal
A(\mathcal P_E)$ coincides with the set of all $J^E_f$, where $f$ is an $E$-measurable $E$-essentially bounded complex function on $S$. A closed
densely defined operator $T$ in $\mathfrak H$ is equal to $J^E_f$ for an $E$-measurable complex function $f$ on $S$ if and only if
\begin{equation}\label{at}
\mathcal A(T)\subset \mathcal A(\mathcal P_E)
\end{equation}
\end{lemma}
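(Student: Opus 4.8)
The plan is to prove the two assertions in turn, the first describing $\mathcal A(\mathcal P_E)$ concretely as the algebra
$\mathcal B=\{J^E_f : f \text{ is an } E\text{-essentially bounded } E\text{-measurable function}\}$, and the second using this description to treat the affiliation statement. First I would verify that $\mathcal B$ is an involutive subalgebra of $L(\mathfrak H)$ containing the identity: this is immediate from the stated properties of the integral, since $J^E_{\chi_A}=E(A)$ (so $\mathcal P_E\subset\mathcal B$ and $J^E_1=I$), formula~(\ref{sum_prod}) gives closure under sums and products (the closures being redundant as the factors are bounded and everywhere defined), and $J^{E*}_f=J^E_{\bar f}$ gives closure under adjoints. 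Approximating essentially bounded $f$ uniformly by simple functions $f_n=\sum_k c_k\chi_{A_k}$ and using~(\ref{normbound}), which gives $\|J^E_{f_n}-J^E_f\|=E\text{-}\mathrm{ess\,sup}\,|f_n-f|\to 0$, shows $J^E_f$ lies in the norm closure of the linear span of $\mathcal P_E$, so that $\mathcal B\subset\mathcal A(\mathcal P_E)$. Taking commutants in $\mathcal P_E\subset\mathcal B\subset\mathcal P_E''$ yields $\mathcal B'=\mathcal P_E'$ and hence $\mathcal B''=\mathcal A(\mathcal P_E)$.

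The crux of the first assertion is then the reverse inclusion $\mathcal A(\mathcal P_E)\subset\mathcal B$, i.e. that $\mathcal B$ is already weakly closed. Here I would use the separability of $\mathfrak H$: choosing a dense sequence $\Psi_i$ with $\sum_i\|\Psi_i\|^2<\infty$ and setting $\mu=\sum_i E_{\Psi_i}$ produces a finite scalar measure with $\mu(A)=0\iff E(A)=0$, so that $E$-a.e. equals $\mu$-a.e. and each complex measure $A\mapsto\langle E(A)\Psi,\Phi\rangle$ is absolutely continuous with respect to $\mu$. Given $R\in\mathcal A(\mathcal P_E)=\mathcal B''$, Kaplansky's density theorem provides a net $J^E_{f_\alpha}\to R$ strongly with $\|f_\alpha\|_\infty\le\|R\|$. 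Since the ball of $L^\infty(S,\mu)=L^1(S,\mu)^*$ is weak-$*$ compact, a subnet of the $f_\alpha$ converges weak-$*$ to some $f$ with $\|f\|_\infty\le\|R\|$; writing $d\langle E(\cdot)\Psi,\Phi\rangle=h_{\Psi,\Phi}\,d\mu$ with $h_{\Psi,\Phi}\in L^1(S,\mu)$ and passing to the limit in $\langle J^E_{f_\alpha}\Psi,\Phi\rangle=\int f_\alpha\,d\langle E(\cdot)\Psi,\Phi\rangle$ identifies $R=J^E_f\in\mathcal B$. This is the step I expect to require the most care.

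For the second assertion, if $T=J^E_f$ then $T$ is normal, and it suffices to show $T$ is affiliated with $\mathcal A(\mathcal P_E)$, i.e. commutes with every $R\in\mathcal A(\mathcal P_E)'=\mathcal P_E'$. Such an $R$ commutes with all $J^E_g$ with $g$ essentially bounded, by the first part. Truncating by $g_n=f\chi_{\{|f|\le n\}}$ and using dominated convergence, together with the domination $E_{R\Psi}(A)=\|E(A)R\Psi\|^2=\|RE(A)\Psi\|^2\le\|R\|^2E_\Psi(A)$ (valid because $R$ commutes with $E(A)$), shows that $\int|f|^2\,dE_{R\Psi}\le\|R\|^2\int|f|^2\,dE_\Psi$, so $R\Psi\in D_{J^E_f}$, and that $J^E_{g_n}\Psi\to J^E_f\Psi$ and $J^E_{g_n}R\Psi\to J^E_fR\Psi$; passing to the limit in $RJ^E_{g_n}\Psi=J^E_{g_n}R\Psi$ gives $RJ^E_f\Psi=J^E_fR\Psi$ for all $\Psi\in D_{J^E_f}$. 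Hence $T$ is affiliated with $\mathcal A(\mathcal P_E)$, which is exactly~(\ref{at}).

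Conversely, suppose~(\ref{at}) holds. Then $\mathcal A(T)$ is Abelian, so $T$ is normal with spectral measure $\mathcal E_T$ on $\C$, and each spectral projection $\mathcal E_T(A)$ lies in $\mathcal A(T)\subset\mathcal A(\mathcal P_E)$; being a projection, it equals $J^E_{\chi_{B_A}}=E(B_A)$ for some measurable $B_A\subset S$ by the first part. The assignment $A\mapsto B_A$ is a Boolean $\sigma$-homomorphism modulo $E$-null sets, and the goal is to realize it by a single measurable point map $f\colon S\to\C$ with $f^{-1}(A)=B_A$ ($E$-a.e.); this is precisely where the fact that $\C$ is a standard Borel space enters. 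Concretely I would fix a countable algebra of rational boxes generating the Borel $\sigma$-algebra of $\C$, arrange the (countably many) Boolean relations to hold off a single $E$-null set, and define $f(s)$ as the unique point cut out by the convergent filter of boxes containing $s$, with completeness and separability of $\C$ guaranteeing existence and uniqueness of that point. This yields $f_*E=\mathcal E_T$, and then~(\ref{pushforward}) gives $T=J^{\mathcal E_T}_{\mathrm{id}_\C}=J^{f_*E}_{\mathrm{id}_\C}=J^E_f$. The realization of the homomorphism by an honest point map is the deepest point of the argument.
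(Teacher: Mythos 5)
Your proposal is correct in overall strategy, but it takes a genuinely different route from the paper at both of its key points, and one step is left with a real gap. For the first assertion, the paper simply cites Birman--Solomjak (Theorem~8 of Sec.~5.4 for $\mathcal A(J^E_f)\subset\mathcal A(\mathcal P_E)$, and Theorem~5 of Sec.~7.4 for the statement that every element of $\mathcal A(\mathcal P_E)$ is an integral of a bounded function), whereas you reprove the latter from scratch: the auxiliary finite measure $\mu=\sum_i E_{\Psi_i}$ with the same null sets as $E$, Kaplansky density, and weak-$*$ compactness of the ball of $L^\infty(S,\mu)$. That argument is sound (strong convergence of $J^E_{f_\alpha}$ to $R$ plus weak-$*$ convergence of a subnet of the $f_\alpha$ identifies $R=J^E_f$ through the $L^1$ densities $h_{\Psi,\Phi}$), and it is where separability of $\mathfrak H$ enters, consistently with the paper's hypotheses; it buys self-containedness at the cost of heavier machinery. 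Your proof of the ``if'' half of the second assertion (truncation $g_n=f\chi_{\{|f|\le n\}}$ together with the domination $E_{R\Psi}\le\|R\|^2E_\Psi$) is likewise a correct direct replacement of the paper's citation.

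The divergence that matters is in the ``only if'' half. The paper disposes of it in a few lines by a trick: $\chi(z)=z/(|z|+1)$ is a bounded one-to-one Borel map of $\C$ onto the unit disc with Borel inverse, so $\chi(T)$ is a bounded element of $\mathcal A(T)\subset\mathcal A(\mathcal P_E)$, hence $\chi(T)=J^E_g$ by the first part, and $T=\chi^{-1}(\chi(T))=J^E_{\chi^{-1}\circ g}$ by the composition formula~(\ref{composition}). You instead pass to the spectral projections $\mathcal E_T(A)=E(B_A)$ and try to realize the Boolean $\sigma$-homomorphism $A\mapsto B_A$ by a point map. This can be made to work (it is Sikorski's realization theorem for $\sigma$-homomorphisms into measure algebras, valid because $\C$ is Polish), but as sketched it has a genuine gap: for $s$ outside the discarded null set, the nested boxes $Q^n_{k(n,s)}$ with $s\in B_{Q^n_{k(n,s)}}$ pin down $f(s)$ only as the unique point of $\bigcap_n\overline{Q^n_{k(n,s)}}$; this point may lie on the boundary of the boxes rather than in them, so the needed identity $f^{-1}(A)=B_A$ modulo $E$-null sets (equivalently $f_*E=\mathcal E_T$) does not follow from the construction as stated. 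To close it one must control boundaries, e.g.\ translate the dyadic grid so that all box boundaries are $\mathcal E_T$-null (possible since the projections attached to disjoint lines are mutually orthogonal and $\mathfrak H$ is separable, so only countably many lines carry nonzero projections) and then run a covering argument, or cite the realization theorem outright. This is precisely the work the paper's $\chi$-trick was designed to avoid: your route is more general in spirit but strictly harder, and its last step needs either the missing boundary argument or an explicit citation.
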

\begin{proof}
By Theorem~8 of Sec.~5.4 in~\cite{BirmanSolomjak}, we have $\mathcal A(J^E_f)\subset \mathcal A(\mathcal P_E)$ for any $E$-measurable complex
function $f$ on $S$. This implies that $J^E_f\in \mathcal A(\mathcal P_E)$ for $E$-essentially bounded $f$ because $J^E_f$ belongs to $L(\mathfrak
H)$ for such $f$ and, hence, is contained in $\mathcal A(J^E_f)$. Conversely, Theorem~5 of Sec.~7.4 in~\cite{BirmanSolomjak} shows that any element
of $\mathcal A(\mathcal P_E)$ is equal to $J^E_f$ for some $E$-measurable $E$-essentially bounded complex function $f$ on $S$. It remains to prove
that any closed densely defined operator $T$ such that (\ref{at}) holds is equal to $J^E_f$ for some $E$-measurable complex function $f$ on $S$.
Since $\mathcal A(\mathcal P_E)$ is Abelian, it follows from~(\ref{at}) that $\mathcal A(T)$ is Abelian and, therefore, $T$ is
normal. Let $\chi$ be a complex function on $\C$ defined by the relation
\[
\chi(z) = \frac{z}{|z|+1}.
\]
It is easy to see that the function $\chi$ is one-to-one and maps $\C$ onto the open unit disc $\mathbb D = \{z\in\C : |z|<1\}$. Its inverse function
$\chi^{-1}$ is given by
\[
\chi^{-1}(z) = \frac{z}{1-|z|},\quad |z|<1.
\]
Since $\chi$ is bounded and measurable on $\C$, we have $\chi(T)\in\mathcal A(\mathcal P_{\mathcal E_T})$ and, hence,
$\chi(T)\in\mathcal A(T)$ because $\mathcal A(\mathcal P_{\mathcal E_T})=\mathcal A(T)$ (see Example~\ref{e2}). In view of~(\ref{at}), this implies that $\chi(T)\in \mathcal A(\mathcal P_E)$ and, therefore, $\chi(T)=J^E_g$ for some
$E$-measurable function $g$ on $S$. As $\C\setminus \mathbb D$ is a $\chi_*\mathcal E_T$-null set and $\chi^{-1}$ is a measurable map from $\mathbb
D$ to $\C$, the function $\chi^{-1}$ is $\chi_*\mathcal E_T$-measurable on $\C$. By~(\ref{600}), we have $\mathcal E_{\chi(T)}=\chi_* \mathcal E_T$
and, hence, $\chi^{-1}$ is $\mathcal E_{\chi(T)}$-measurable. It therefore follows from~(\ref{composition}) that
\[
T = \chi^{-1}(\chi(T)) = \chi^{-1}(J^E_g) = J^E_f
\]
for $f=\chi^{-1}\circ g$. The lemma is proved.
\end{proof}

Given a topological space $S$, we denote by $C(S)$ the space of all continuous
complex functions on $S$.

\begin{lemma}\label{l01}
Let $S$ be a Polish space, $\mathfrak H$ be a Hilbert space, and $E$ be a spectral measure for $(S,\mathfrak H)$. Let $\mathcal C$ be a subset of
$C(S)$ that separates the points of $S$ and $\mathcal X$ be the set of all operators $J^E_f$ with $f\in\mathcal C$. Then $\mathcal A(\mathcal
X)=\mathcal A(\mathcal P_E)$.
\end{lemma}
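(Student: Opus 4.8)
The plan is to prove the two inclusions $\mathcal A(\mathcal X)\subset\mathcal A(\mathcal P_E)$ and $\mathcal A(\mathcal P_E)\subset\mathcal A(\mathcal X)$ separately. The first is immediate from Lemma~\ref{l4d}: every $f\in\mathcal C$ is continuous, hence $E$-measurable, so $\mathcal A(J^E_f)\subset\mathcal A(\mathcal P_E)$, whence $J^E_f$ and $J^E_{\bar f}=J^{E*}_f$ commute with every element of $\mathcal A(\mathcal P_E)'$; thus $\mathcal A(\mathcal P_E)'\subset(\mathcal X\cup\mathcal X^*)'$ and, taking commutants, $\mathcal A(\mathcal X)=(\mathcal X\cup\mathcal X^*)''\subset\mathcal A(\mathcal P_E)$. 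Before attacking the reverse inclusion I would reduce to a countable $\mathcal C$: since $S$ is separable and metrizable, the open subset $\{(s,s')\in S\times S:s\neq s'\}$ is Lindel\"of, and covering it by the open sets $\{(s,s'):f(s)\neq f(s')\}$, $f\in\mathcal C$, extracts a countable subfamily $\{f_n\}_{n\in\mathbb N}\subset\mathcal C$ that still separates points. Writing $\mathcal X_0=\{J^E_{f_n}\}_n\subset\mathcal X$, it suffices to prove $\mathcal A(\mathcal P_E)\subset\mathcal A(\mathcal X_0)$.

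The next step is to transport the problem to a compact model space. Let $\chi$ be the homeomorphism of $\C$ onto $\mathbb D$ from the proof of Lemma~\ref{l4d} and set $g_n=\chi\circ f_n$; these are bounded, continuous, $\mathbb D$-valued, and still separate points. Form the compact metrizable space $K=\overline{\mathbb D}^{\mathbb N}$ and the continuous injection $\psi=(g_n)_n\colon S\to K$, and put $\tilde E=\psi_*E$. If $\pi_n\colon K\to\overline{\mathbb D}$ is the $n$-th coordinate, then $\pi_n\circ\psi=g_n$, so by~(\ref{pushforward}) and~(\ref{composition}) we get $J^{\tilde E}_{\pi_n}=J^E_{g_n}=\chi(J^E_{f_n})\in\mathcal A(J^E_{f_n})\subset\mathcal A(\mathcal X_0)$, and likewise $J^{\tilde E}_{\bar\pi_n}\in\mathcal A(\mathcal X_0)$.

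Now I would run the commutant computation. Take $R\in\mathcal A(\mathcal X_0)'$; then $R$ commutes with every $J^{\tilde E}_{\pi_n}$ and $J^{\tilde E}_{\bar\pi_n}$, hence, by~(\ref{sum_prod}) for bounded functions, with $J^{\tilde E}_p$ for every polynomial $p$ in the $\pi_n,\bar\pi_n$. Since the coordinate functions and their conjugates separate points of the compact $K$, the Stone--Weierstrass theorem makes such polynomials uniformly dense in $C(K)$; as $\|J^{\tilde E}_h-J^{\tilde E}_p\|=\|J^{\tilde E}_{h-p}\|\le\sup_K|h-p|$ by~(\ref{normbound}) and commutation with the bounded operator $R$ survives norm limits, $R$ commutes with $J^{\tilde E}_h$ for all $h\in C(K)$. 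To reach spectral projections, consider the family $\mathcal F$ of bounded Borel $h$ on $K$ with $RJ^{\tilde E}_h=J^{\tilde E}_hR$: using $\|J^{\tilde E}_u\Psi\|^2=\int|u|^2\,d\tilde E_\Psi$, dominated convergence shows $J^{\tilde E}_{h_k}\to J^{\tilde E}_h$ strongly whenever $h_k\to h$ boundedly $\tilde E$-a.e., and commutation survives strong limits, so $\mathcal F$ is a vector space closed under bounded pointwise limits and containing $C(K)$. By the functional monotone class theorem $\mathcal F$ contains all bounded Borel functions, in particular $\chi_A$ for every Borel $A\subset K$, so $R$ commutes with $\tilde E(A)=E(\psi^{-1}(A))$ for all such $A$.

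The last step is to transfer this back to $E$, and this is where I expect the main obstacle to lie, since point separation alone is not enough: one must know that $\psi$ reconstructs the full Borel structure of $S$. The clean way is the Lusin--Souslin theorem: as $\psi$ is a continuous injection of the standard Borel space $S$ into the Polish space $K$, it is a Borel isomorphism onto a Borel subset of $K$, so every Borel $B\subset S$ has the form $\psi^{-1}(A)$ with $A\subset K$ Borel. Consequently $R$ commutes with $E(B)$ for every measurable $B$, i.e.\ $R\in\mathcal P_E'=\mathcal A(\mathcal P_E)'$. Hence $\mathcal A(\mathcal X_0)'\subset\mathcal A(\mathcal P_E)'$, which gives $\mathcal A(\mathcal P_E)\subset\mathcal A(\mathcal X_0)\subset\mathcal A(\mathcal X)$; combined with the first inclusion this yields $\mathcal A(\mathcal X)=\mathcal A(\mathcal P_E)$. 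The monotone class passage from continuous functions to projections is routine; the genuinely nonelementary input is this descriptive set-theoretic fact that an injective continuous image of a Borel set in a Polish space is Borel with Borel inverse.
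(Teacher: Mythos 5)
Your proof is correct, but it takes a genuinely different route from the paper's. The paper proves $\mathcal A(\mathcal P_E)\subset\mathcal A(\mathcal X)$ with no countability reduction and no descriptive set theory: for $R\in(\mathcal X\cup\mathcal X^*)'$ it shows that each compression $R_K=E(K)RE(K)$, with $K\subset S$ compact, commutes with every $J^E_f$, $f\in C(S)$, by applying Stone--Weierstrass \emph{on $K$} to the algebra generated by $\mathcal C\cup\bar{\mathcal C}$ and the constants; it then uses tightness of the finite Borel measures $E_\Psi$ and $E_{R\Psi}$ on the Polish space $S$ (Billingsley's theorem) to conclude that $R$ is a strong limit of the $R_K$, hence lies in the commutant of $\{J^E_f : f\in C(S)\}$ by the strong closedness of commutants (Lemma~\ref{l2}); finally it descends to $\mathcal P_E'$ by approximating $\chi_F$, $F$ closed, by a bounded pointwise convergent sequence of continuous functions and running a $\sigma$-algebra argument. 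You instead extract a countable separating subfamily (your Lindel\"of argument is valid), embed $S$ continuously and injectively into the compact metrizable cube $\overline{\mathbb D}^{\mathbb N}$ via $\psi$, push $E$ forward, apply Stone--Weierstrass globally on the cube together with a monotone class argument, and then invoke the Lusin--Souslin theorem to recover every Borel subset of $S$ as a preimage $\psi^{-1}(A)$. That is essentially the strategy of the paper's Lemma~\ref{l4b} --- embedding into $\C^I$ and citing statement~1 of Lemma~\ref{l4a}, which is exactly the Souslin-type fact you need --- except that the paper deduces Lemma~\ref{l4b} from Lemma~\ref{l01} by transferring a Polish topology, whereas you prove Lemma~\ref{l01} by the embedding itself; in effect you have merged Lemmas~\ref{l01} and~\ref{l4b} into one. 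The trade-off: the paper's route keeps Lemma~\ref{l01} elementary (tightness of finite Borel measures is its only nontrivial input) and handles uncountable $\mathcal C$ directly, quarantining the descriptive set theory in Lemmas~\ref{l4a} and~\ref{l4b}, while your route makes Lemma~\ref{l01} depend on the much deeper Lusin--Souslin theorem, which this statement does not actually require, although the paper needs that theorem elsewhere anyway. Two small implicit steps you should flag: the inclusion $\chi(J^E_{f_n})\in\mathcal A(J^E_{f_n})$ rests on the identity $\mathcal A(T)=\mathcal A(\mathcal P_{\mathcal E_T})$ for normal $T$ (the fact invoked in the proof of Lemma~\ref{l4d}), and your concluding passage from $\mathcal A(\mathcal X_0)'\subset\mathcal P_E'$ to $\mathcal A(\mathcal P_E)\subset\mathcal A(\mathcal X_0)$ uses $\mathcal A(\mathcal P_E)=\mathcal P_E''$; both are available in the paper, so these are presentation gaps only.
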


In the proof below, all spectral integrals are taken with respect to $E$, and we write for brevity $J_f$ instead of $J^E_f$.
\begin{proof}
Since $\mathcal A(\mathcal X)\subset \mathcal A(\mathcal P_E)$ by Lemma~\ref{l4d}, we have to show that $\mathcal A(\mathcal P_E)\subset \mathcal
A(\mathcal X)$.

Let $\bar{\mathcal C}$ denote the set of functions, complex conjugate to the elements of $\mathcal C$, and let $\mathfrak A$ be the subalgebra of
$C(S)$ generated by $\mathcal C\cup \bar{\mathcal C}$ and the constant functions. Fix $R\in (\mathcal X\cup \mathcal X^*)'$ and let $\mathfrak A_R$
denote the subset of $C(S)$ consisting of all $f$ such that $J_f$ commutes with $R$. If $f,g\in \mathfrak A_R$, then both $J_fJ_g$ and $J_f+J_g$
commute with $R$. It follows from~(\ref{sum_prod}) that both $J_{fg}$ and $J_{f+g}$ commute\footnote{If a closable densely defined operator $T$ commutes with $R\in L(\mathfrak H)$, then its closure $\bar T$ also commutes with $R$ because $R=(R^*)^*$ and $\bar T = (T^*)^*$.} with $R$, i.e.,
$fg\in\mathfrak A_R$ and $f+g\in\mathfrak A_R$. Hence, $\mathfrak A_R$ is an algebra. Since $\mathfrak A_R$ obviously contains $\mathcal C\cup
\bar{\mathcal C}$ and all constant functions, we have $\mathfrak A_R\supset \mathfrak A$. Thus, every element of $(\mathcal X\cup \mathcal X^*)'$
commutes with any operator $J_f$ with $f\in\mathfrak A$.

Given $f\in C(S)$ and a compact set $K\subset S$, we set $B_{f,K}=J_f E(K)$. Let $\Psi\in \mathfrak H$ and $\Phi=E(K)\Psi$. Then for any measurable
set $A$, we have $E_\Phi(A)=E_\Psi(A\cap K)$ and, therefore, $E_\Phi$ is a finite measure supported by $K$. In view of~(\ref{dom}), this implies that
$\Phi\in D_{J_f}$, i.e., the range of $E(K)$ is contained in the domain of $J_f$. Since $E(K)=J_{\chi_K}$, where $\chi_K$ is the characteristic
function of $K$, it follows from~(\ref{sum_prod}) that $B_{f,K}=J_{f\chi_K}$. Hence, $B_{f,K}\in L(\mathfrak H)$ and~(\ref{normbound}) implies that
\begin{equation}\label{norm}
\|B_{f,K}\|\leq \sup_{s\in K}|f(s)|.
\end{equation}
If $f,g\in C(S)$, then~(\ref{sum_prod}) implies that $B_{f+g,K}=B_{f,K}+B_{g,K}$.

We now show that
\begin{equation}\label{incl}
(\mathcal X\cup \mathcal X^*)'\subset {\mathcal Y}',
\end{equation}
where $\mathcal Y$ is the set of all $J_f$ with $f\in C(S)$. Let $R\in (\mathcal X\cup \mathcal X^*)'$. We first prove that $\mathcal Y'$ contains
all operators $R_K=E(K)RE(K)$, where $K$ is a compact subset of $S$. Fix $f\in C(S)$ and let $\varepsilon>0$. Since $\mathcal C\subset \mathfrak A$,
the algebra $\mathfrak A$ separates points of $S$, and the Stone--Weierstrass theorem implies that there is $g\in \mathfrak A$ such that
$|f(s)-g(s)|< \varepsilon$ for any $s\in K$. Since $R_K$ commutes with both $J_g$ and $E(K)$, it follows that $R_K$ commutes with $B_{g,K}$. In view
of~(\ref{norm}), we have
\begin{multline}\nonumber
\|B_{f,K}R_K-R_K B_{f,K}\|\leq \|B_{f,K}R_K-B_{g,K}R_K\|+ \|R_K B_{g,K}-R_K B_{f,K}\|\leq \\ \leq 2\|B_{f-g,K}\|\|R\|<2\varepsilon\|R\|.
\end{multline}
Because $\varepsilon$ is arbitrary, this means that $R_K$ commutes with $B_{f,K}$. This implies that $R_K$ commutes with $J_f$ because
$B_{f,K}R_K=J_f R_K$ and $R_KB_{f,K}$ is an extension of $R_K J_f$ by the commutativity of $E(K)$ and $J_f$. This proves that $R_K\in \mathcal Y'$.
By Lemma~\ref{l2}, $\mathcal Y'$ is strongly closed. Hence, inclusion~(\ref{incl}) will be proved if we
demonstrate that every strong neighborhood of $R$ contains $R_K$ for some compact set $K$. To this end, it suffices to show that for every
$\Psi\in\mathfrak H$ and $\varepsilon>0$, there is a compact set $K_{\Psi,\varepsilon}$ such that
\begin{equation}\label{est}
\|(R-R_K)\Psi\|\leq \varepsilon
\end{equation}
for any compact set $K\supset K_{\Psi,\varepsilon}$. Since
\[
R-R_K = E(K)RE(S\setminus K) + E(S\setminus K)R,
\]
we have
\[
\|(R-R_K)\Psi\|\leq \|R\|\, E_\Psi(S\setminus K)^{\frac{1}{2}} + E_\Phi(S\setminus K)^{\frac{1}{2}},
\]
where $\Phi = R\Psi$. As $S$ is a Polish space, Theorem~1.3 in~\cite{Billingsley} ensures that there is a compact set $K_{\Psi,\varepsilon}$ such
that both $\|R\|^2 E_\Psi(S\setminus K_{\Psi,\varepsilon})$ and $E_\Phi(S\setminus K_{\Psi,\varepsilon})$ do not exceed $\varepsilon^2/4$ and, therefore,
(\ref{est}) holds for any $K\supset K_{\Psi,\varepsilon}$. Inclusion~(\ref{incl}) is thus proved.

We next show that
\begin{equation}\label{incl1}
\mathcal Y'\subset \mathcal P'_E.
\end{equation}
Let $R\in\mathcal Y'$. For any closed set $F\subset S$, it is easy to construct a uniformly bounded sequence of functions $f_n\in C(S)$ that
converges pointwise to $\chi_F$. Then $J_{f_n}$ strongly converge to $J_{\chi_F}=E(F)$ (see Theorem~2 of Sec.~5.3 in~\cite{BirmanSolomjak}). Since
$J_{f_n}$ commute with $R$ for all $n$, this implies that $E(F)$ commutes with $R$. Let $\Sigma^R$ denote the set of all measurable sets $A\subset S$
such that $E(A)$ commutes with $R$. We have proved that $\Sigma^R$ contains all closed sets. If $A\in\Sigma^R$, then $E(S\setminus A)=1-E(A)$
commutes with $R$ and, hence, $S\setminus A\in\Sigma^R$. If $A_1,A_2\in\Sigma^R$, then both $E(A_1\cap A_2)=E(A_1)E(A_2)$ and $E(A_1\cup A_2)=
E(A_1)+E(A_2)-E(A_1)E(A_2)$ commute with $R$ and, therefore, $A_1\cap A_2$ and $A_1\cup A_2$ belong to $\Sigma^R$. Let $A_n$ be a sequence of
elements of $\Sigma^R$ and $A=\bigcup_{n=1}^\infty A_n$. For all $n=1,2,\ldots$, we set $B_n=\bigcup_{j=1}^n A_j$. Then $B_n\in\Sigma^R$ for all $n$,
and the $\sigma$-additivity of $E$ implies that $E(B_n)$ converge strongly to $E(A)$. Hence, $E(A)$ commutes with $R$, i.e., $A\in\Sigma^R$. We thus
see that $\Sigma^R$ is a $\sigma$-algebra containing all closed sets. This implies that $\Sigma^R$ coincides with the Borel $\sigma$-algebra, and
(\ref{incl1}) is proved.

Inclusions~(\ref{incl}) and~(\ref{incl1}) imply that $(\mathcal X\cup \mathcal X^*)'\subset \mathcal P'_E$ and, hence, $\mathcal A(\mathcal
P_E)\subset \mathcal A(\mathcal X)$. The lemma is proved.
\end{proof}

The next lemma summarizes the facts about Polish and standard Borel spaces that are needed for the proof of Theorem~\ref{t0}.

\begin{lemma}\label{l4a}
\begin{enumerate}
\item[1.] Let $S$ and $S'$ be standard Borel spaces and $f\colon S\to S'$ be a one-to-one measurable mapping. Then $f(S)$ is a measurable subset of
$S'$ and $f$ is a measurable isomorphism from $S$ onto $f(S)$.

\item[2.] Let $S$ be a Polish space and $B$ be its Borel subset. Then there are a Polish space $P$ and a continuous one-to-one map $g\colon P\to S$
such that $B=g(P)$.

\item[3.] If $S$ is a standard Borel space, then there exists a one-to-one measurable function from $S$ to the segment $[0,1]$.

\item[4.] Every measurable subset of a standard Borel space is itself a standard Borel space.
\end{enumerate}
\end{lemma}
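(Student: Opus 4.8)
The four assertions are classical facts of descriptive set theory, and the plan is to derive them from two tools: a change-of-topology argument for Borel sets, which yields part~2 directly, and the Lusin--Souslin theorem on injective Borel images, which yields part~1; parts~3 and~4 then follow by composition. I would begin with part~2, since it feeds into the others. The starting point is the change-of-topology lemma: if $(S,\tau)$ is Polish and $B\subset S$ is Borel, then there is a finer Polish topology $\tau'$ on $S$ with the same Borel $\sigma$-algebra in which $B$ is clopen. This is proved by transfinite induction on the Borel rank of $B$: a single closed set is made clopen by passing to the topological sum of the set and its complement (both Polish), and a countable supremum of such refinements remains Polish and Borel-equivalent. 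Granting this, I would equip $B$ with the subspace topology induced by $\tau'$; since $B$ is $\tau'$-closed it is Polish, so I take $P$ to be this Polish space and $g\colon P\to S$ the inclusion. Because $\tau'\supset\tau$, the map $g$ is continuous, it is injective, and $g(P)=B$.

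The heart of the matter is part~1, and I expect it to be the main obstacle. First I would reduce to the case where $f$ is continuous: realizing $S$ by a Polish topology and refining it, as above, so that the $f$-preimages of a countable base of $S'$ become clopen, one obtains a Polish topology on $S$ with unchanged Borel structure for which $f$ is continuous. Thus one may assume $S$ is Polish and $f$ is continuous and injective, so that $f(S)$ is analytic, and the crux is to upgrade ``analytic'' to ``Borel''. Here I would invoke the Lusin separation theorem (disjoint analytic sets are separated by a Borel set) together with injectivity: fixing a complete metric on $S$ and decomposing $S$ across all scales into countably many Borel pieces of arbitrarily small diameter, injectivity makes the images of distinct pieces pairwise disjoint analytic sets, which the separation theorem separates by Borel sets. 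Assembling these separating sets across all scales exhibits $f(S)$ as Borel and $f^{-1}$ as Borel on $f(S)$, whence $f$ is a measurable isomorphism onto $f(S)$. This step is essentially the full strength of the Lusin--Souslin theorem and of the separation theorem underlying it; everything else is bookkeeping.

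For part~3, I would realize the standard Borel structure of $S$ by a Polish topology; since that topology is separable metrizable, $S$ embeds homeomorphically, hence Borel-isomorphically onto its image, into the Hilbert cube $[0,1]^{\mathbb{N}}$. Composing with a fixed one-to-one measurable map $[0,1]^{\mathbb{N}}\to[0,1]$ --- obtained by sending each coordinate to its canonical binary expansion and interleaving the digits, with a measurable choice resolving the countably many dyadic ambiguities --- yields the desired one-to-one measurable function $S\to[0,1]$; note that only injectivity and measurability of this map are needed, so part~1 is not invoked here. Finally, part~4 is immediate from parts~1 and~2: given a measurable set $B$ in a standard Borel space $S$, part~2 furnishes a Polish space $P$ and a continuous injection $g\colon P\to S$ with $g(P)=B$, and part~1 applied to $g$ shows that $g$ is a measurable isomorphism of $P$ onto $B$. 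Since $P$ is Polish and hence standard Borel, $B$ inherits through $g$ the structure of a standard Borel space.
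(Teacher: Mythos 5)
Your proposal is correct, but its relationship to the paper's proof is uneven: the paper disposes of parts 1--3 essentially by citation, while you prove them. For part 1 the paper quotes Mackey's Theorem 3.2 (a reformulation of Souslin's theorem) and for part 2 a lemma of Bourbaki; your arguments for these two parts --- the change-of-topology lemma proved by induction over the Borel hierarchy, and the reduction of part 1 to a continuous injection followed by the Lusin separation argument --- are precisely the standard proofs of those cited results, so the underlying mathematics coincides and the difference is only in how much is delegated to the literature (note that you still use the separation theorem as a black box, so your version is longer but not more elementary in substance). The genuine divergence is part 3: the paper deduces it from the Borel isomorphism theorem (every standard Borel space is countable or Borel-isomorphic to $[0,1]$, cited from Takesaki), a far stronger statement than needed, whereas you embed $S$ homeomorphically into the Hilbert cube and compose with an injective measurable map $[0,1]^{\mathbb{N}}\to[0,1]$ built by interleaving binary digits. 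This buys economy of means --- as you correctly observe, only injectivity and measurability are needed, and part 1 is not invoked --- but it carries the one delicate point of your write-up: injectivity of the interleaving map must be checked against the ambiguity of dyadic expansions (with canonical expansions, an interleaved digit sequence containing a $0$ cannot end in all $1$'s, which is what rules out collisions of values), and this verification should be made explicit rather than waved at. Part 4 you derive from parts 1 and 2 exactly as the paper does.
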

\begin{proof}
Statement~1 follows from Theorem~3.2 in~\cite{Mackey}, which, in its turn, is a reformulation of a theorem by Souslin (see~\cite{Kuratowski},
Chapter~III, Sec.~35.IV). For the proof of statement~2, see Lemma~6 of Sec.~IX.6.7 in~\cite{Bourbaki}. To prove statement~3, we recall that every
standard Borel space is either countable or isomorphic to the segment $[0,1]$ (see~\cite{Takesaki}, Appendix, Corollary~A.11). In the latter case,
any isomorphism between $S$ and $[0,1]$ gives us the required function. If $S$ is countable, then we can just choose any one-to-one map from $S$ to
$[0,1]$ because all functions on $S$ are measurable. Since every standard Borel space can be endowed with a Polish topology inducing its measurable structure, statement~4 follows from statements~1 and~2. The lemma is proved.
\end{proof}

\begin{lemma}\label{l4b}
Let $E$ be a spectral measure on a standard Borel space $S$, $I$ be a countable set and $\{f_\iota\}_{\iota\in I}$ be a family of measurable
complex-valued functions on $S$ that separates the points of $S$. Then the von~Neumann algebra generated by all operators $J_{f_\iota}$ with
$\iota\in I$ coincides with $\mathcal A(\mathcal P_E)$.
\end{lemma}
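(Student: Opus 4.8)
The plan is to reduce the measurable setting of this lemma to the continuous setting already handled in Lemma~\ref{l01}, by assembling the separating family into a single map and pushing the spectral measure forward along it. Throughout, $\mathcal X$ denotes the set $\{J^E_{f_\iota}\}_{\iota\in I}$.

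First I would define $\varphi\colon S\to \C^I$ by $\varphi(s)=(f_\iota(s))_{\iota\in I}$. Since $I$ is countable, $\C^I$ with the product topology is a Polish space, and its Borel $\sigma$-algebra is generated by the coordinate projections $\pi_\iota\colon \C^I\to\C$; as each $f_\iota=\pi_\iota\circ\varphi$ is measurable, the map $\varphi$ is measurable. Because $\{f_\iota\}_{\iota\in I}$ separates the points of $S$, the map $\varphi$ is one-to-one. Both $S$ and $\C^I$ are standard Borel spaces, so statement~1 of Lemma~\ref{l4a} applies: $\varphi(S)$ is a measurable subset of $\C^I$, and $\varphi$ is a measurable isomorphism from $S$ onto $\varphi(S)$.

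Next I would introduce the push-forward $E'=\varphi_*E$, a spectral measure for $(\C^I,\mathfrak H)$, and check that it has the same set of values as $E$. On one hand, $E'(A)=E(\varphi^{-1}(A))\in\mathcal P_E$ for every Borel $A\subset\C^I$, so $\mathcal P_{E'}\subset\mathcal P_E$. On the other hand, given a measurable $B\subset S$, the set $\varphi(B)$ is measurable in $\C^I$ (being a measurable subset of the measurable set $\varphi(S)$, as $\varphi$ is a measurable isomorphism onto $\varphi(S)$) and $\varphi^{-1}(\varphi(B))=B$ by injectivity, whence $E(B)=E'(\varphi(B))\in\mathcal P_{E'}$. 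Thus $\mathcal P_E=\mathcal P_{E'}$, and in particular $\mathcal A(\mathcal P_E)=\mathcal A(\mathcal P_{E'})$. Moreover, each $\pi_\iota$ is continuous, hence $E'$-measurable, so formula~(\ref{pushforward}) with $f=\pi_\iota$ yields $J^E_{f_\iota}=J^E_{\pi_\iota\circ\varphi}=J^{E'}_{\pi_\iota}$, i.e.\ $\mathcal X=\{J^{E'}_{\pi_\iota}\}_{\iota\in I}$.

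Finally I would apply Lemma~\ref{l01} to the Polish space $\C^I$, the spectral measure $E'$, and the set $\mathcal C=\{\pi_\iota\}_{\iota\in I}\subset C(\C^I)$, which separates the points of $\C^I$ since any two distinct points differ in some coordinate. Lemma~\ref{l01} gives $\mathcal A(\{J^{E'}_{\pi_\iota}\}_{\iota\in I})=\mathcal A(\mathcal P_{E'})$, and combining this with the identities above produces $\mathcal A(\mathcal X)=\mathcal A(\mathcal P_{E'})=\mathcal A(\mathcal P_E)$, as required. The delicate point on which the whole argument rests is the identity $\mathcal P_E=\mathcal P_{E'}$: it is precisely the standardness of $S$, through Souslin's theorem in the form of statement~1 of Lemma~\ref{l4a}, that makes $\varphi(S)$ measurable and $\varphi$ a measurable isomorphism, so that measurable subsets of $S$ correspond bijectively to measurable subsets of $\C^I$. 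Without standardness, the push-forward could lose projections and the reduction to Lemma~\ref{l01} would break down.
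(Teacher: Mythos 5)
Your proof is correct, but it takes a genuinely different route from the paper's. Both arguments begin identically: assemble the family into the injective measurable map $\varphi\colon S\to\C^I$, $\varphi(s)=(f_\iota(s))_{\iota\in I}$, and invoke Souslin's theorem (statement~1 of Lemma~\ref{l4a}) to conclude that $\varphi$ is a measurable isomorphism onto the measurable set $\varphi(S)$. From there, the paper transfers \emph{topology} while you transfer the \emph{measure}. The paper uses statement~2 of Lemma~\ref{l4a} to write $\varphi(S)=g(P)$ with $P$ Polish and $g$ continuous and one-to-one, and then uses $h=\varphi^{-1}\circ g$ to re-topologize $S$ itself as a Polish space inducing the same Borel structure, in which all the $f_\iota$ become continuous; Lemma~\ref{l01} is then applied directly to $(S,E)$ and the original functions. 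You instead push the spectral measure forward to $E'=\varphi_*E$ on $\C^I$, prove $\mathcal P_E=\mathcal P_{E'}$ (the reverse inclusion being exactly where Souslin's theorem enters, to know that $\varphi(B)$ is Borel in $\C^I$), identify $J^E_{f_\iota}=J^{E'}_{\pi_\iota}$ via~(\ref{pushforward}), and apply Lemma~\ref{l01} to $(\C^I,E')$ with the coordinate projections, which are continuous and separate points. Your route avoids statement~2 of Lemma~\ref{l4a} (the Bourbaki lemma) and the topology-transfer argument entirely, at the cost of the extra verification $\mathcal P_E=\mathcal P_{E'}$; it also mirrors the technique the paper itself uses in the converse half of the proof of Theorem~\ref{t0}, where $f_*E$ on $\C^I$ and Lemma~\ref{l01} appear in just this way (there without injectivity, so no identification of the projection sets is available). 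Both proofs are sound; yours is arguably more economical in its use of descriptive set theory, while the paper's keeps all objects living on the original space $S$.
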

\begin{proof}
Let $f$ denote the map $s\to \{f_\iota(s)\}_{\iota\in I}$ from $S$ to $\C^I$. The space $\C^I$ endowed with its natural product topology is a Polish
space, and the measurability of $f_\iota$ implies that of $f$. Since $f_\iota$ separate the points of $S$, the map $f$ is one-to-one. By statement~1
of Lemma~\ref{l4a}, $f(S)$ is a Borel subset of $\C^I$ and $f$ is a measurable isomorphism of $S$ onto $f(S)$. By statement~2 of Lemma~\ref{l4a},
there are a Polish space $P$ and a continuous one-to-one map $g\colon P\to \C^I$ such that $f(S)=g(P)$. Hence, $h = f^{-1}\circ g$ is a measurable
one-to-one map from $P$ onto $S$. By statement~1 of Lemma~\ref{l4a}, $h$ is a measurable isomorphism from $P$ onto $S$. We now use $h$ to transfer
the topology from $P$ to $S$, i.e., we say that a set $O\subset S$ is open if and only if $h^{-1}(O)$ is open in $P$. Once $S$ is equipped with this
topology, $h$ becomes a homeomorphism between $P$ and $S$ and, hence, $S$ becomes a Polish space. Since $h$ is a measurable isomorphism, the Borel
measurable structure generated by the topology of $S$ coincides with its initial measurable structure. Because $f=g\circ h^{-1}$ is continuous, all
$f_\iota$ are continuous. Hence, the statement follows from Lemma~\ref{l01}. The lemma is proved.
\end{proof}

\begin{proof}[Proof of Theorem~$\ref{t0}$] {$\,$}
\par\noindent

Suppose conditions~(1) and~(2) hold. By Lemma~\ref{l4d}, condition~(1) implies that $\mathcal A(\mathcal X)\subset \mathcal A(\mathcal P_E)$. Let the
family $\{f_\iota\}_{\iota\in I}$ be as in condition~(2) and $\mathcal X_0$ be the set of all $J^E_{f_\iota}$ with $\iota\in I$. Without loss of generality, we can assume that all $f_\iota$ are everywhere defined and measurable on $S$. Using statement~4 of Lemma~\ref{l4a} and the fact that $E$ is
standard, we can choose a measurable subset $\tilde S$ of $S$ such that $E(S\setminus\tilde S)=0$, the family $\{f_\iota\}_{\iota\in I}$ separates points of $\tilde S$, and $\tilde S$, considered as a measurable subspace of
$S$, is a standard Borel space. Let $\tilde E$ denote the restriction of $E$ to $\tilde S$. For each $\iota\in I$, let
$\tilde f_\iota$ be the restriction of $f_\iota$ to $\tilde S$. Then we have $J^{\tilde E}_{\tilde
f_\iota}=J^E_{f_\iota}$ for all $\iota\in I$ and it follows from Lemma~\ref{l4b} that $\mathcal A(\mathcal X_0)=\mathcal A(\mathcal P_{\tilde E})$.
As $\mathcal P_E = \mathcal P_{\tilde E}$, this implies that $\mathcal A(\mathcal P_E)\subset \mathcal A(\mathcal X)$ and, hence, $\mathcal
A(\mathcal P_E) = \mathcal A(\mathcal X)$.

Conversely, let $\mathcal A(\mathcal P_E) = \mathcal A(\mathcal X)$. Then condition~(1) is ensured by Lemma~\ref{l4d}. By Lemma~\ref{l41}, there is a
countable set $\mathcal X_0\subset \mathcal X$ such that $\mathcal A(\mathcal X_0)=\mathcal A(\mathcal X)$. Choose a countable family
$\{f_\iota\}_{\iota\in I}$ of measurable complex functions on $S$ such that $\mathcal X_0$ coincides with the set of all operators $J^E_{f_\iota}$.
It suffices to show that $\{f_\iota\}_{\iota\in I}$ is $E$-separating. Let $f$ be the measurable map $s\to \{f_\iota(s)\}_{\iota\in I_0}$ from
$S$ to $\C^I$. For each $\iota\in I$, let $\pi_\iota\colon \C^{I}\to\C$ be the function taking $\{z_\kappa\}_{\kappa\in I}$ to $z_\iota$. For any
$\iota\in I$, we have $\pi_\iota\circ f = f_\iota$, and it follows from~(\ref{pushforward}) that $J^{f_*E}_{\pi_\iota} = J^E_{f_\iota}$ for all
$\iota\in I$, i.e., the set of all $J^{f_*E}_{\pi_\iota}$ coincides with $\mathcal X_0$. Since the family $\{\pi_\iota\}_{\iota\in I}$ obviously
separates the points of $\C^{I}$, Lemma~\ref{l01} implies that $\mathcal A(\mathcal X_0) = \mathcal A(\mathcal P_{f_*E})$ and, hence,
\begin{equation}\label{spec_mes}
\mathcal A(\mathcal P_{E}) = \mathcal A(\mathcal P_{f_*E}).
\end{equation}
As above, let $\tilde S$ be a measurable subset of $S$ such that $E(S\setminus\tilde S)=0$ and $\tilde S$, considered as a measurable subspace of
$S$, is a standard Borel space. By statement~3 of Lemma~\ref{l4a}, there exists a one-to-one measurable function $g$ on $\tilde S$. Clearly, $g$ is $E$-measurable on $S$ and it
follows from Lemma~\ref{l4d} and~(\ref{spec_mes}) that $\mathcal A(J^E_g)\subset \mathcal A(\mathcal P_{f_*E})$. Now Lemma~\ref{l4d} implies that there
exists a measurable function $h$ on $\C^{I}$ such that $J^E_g = J^{f_*E}_h$. In view of~(\ref{pushforward}), this means that $J^E_g = J^E_{h\circ f}$
and, hence, $g$ and $h\circ f$ are equal $E$-a.e.\footnote{We have $J^E_f=0$ for an $E$-measurable $f$ if and only if $f=0$ $E$-a.e. (see~Lemma~2 of of Sec.~5.4 in~\cite{BirmanSolomjak}). By~(\ref{sum_prod}), this implies that $J^E_f=J^E_g$ for $E$-measurable $f$ and $g$ if and only if $f=g$ $E$-a.e.} Since $g$ is one-to-one on $\tilde S$, it follows that $f$ is one-to-one on $\tilde S\setminus N$,
where $N$ is the $E$-null set of all $s\in\tilde S$ such that $g(s)\neq h(f(s))$. This means that $\{f_\iota\}_{\iota\in I}$ is $E$-separating and the theorem
is proved.
\end{proof}

\section*{Acknowledgments}
The author is grateful to I.V.~Tyutin and B.L.~Voronov for useful discussions. This research was supported by the Russian Foundation for Basic Research (Grant No. 09-01-00835)

\end{document}